\journalname{Journal of Scientific Computing}
\newcommand{\pderivative}[2]{\frac{\partial #1}{\partial #2}}
\newcommand\iprod[1]{\left\langle #1\right\rangle} 			
\newcommand\inorm[1]{\left |\left| #1\right|\right|}			
\newcommand\iprodN[1]{\left\langle #1\right\rangle_{\!N}}	
\newcommand\inormN[1]{\left |\left| #1\right|\right|_{N}}		
\newcommand\Nvec[1]{\underline{#1}}
\newcommand\Dmat{\mathcal{D}}
\newcommand\Mmat{\mathcal{M}}
\newcommand\Vmat{\mathcal{V}}
\newcommand\Bmat{\mathcal{B}}
\newcommand\Vinvmat{\mathcal{V}^{-1}}
\newcommand\Cmat{\mathcal{C}}
\newcommand\quadmat{\mathcal{K}}
\newcommand\Filtermat{\mathcal{F}}
\newcommand\explicitFiltermat{\widetilde{\Filtermat}_{\text{aux}}}
\numberwithin{equation}{section}
\newtheoremstyle{dotless}{}{}{\itshape}{}{\bfseries}{}{ }{}
\theoremstyle{dotless}
\newtheorem{rem}{Remark}
\newcommandx{\unsure}[2][1=]{\todo[linecolor=blue,backgroundcolor=blue!25,bordercolor=blue,#1]{#2}}
\newcommandx{\changeThis}[2][1=]{\todo[linecolor=red,backgroundcolor=red!25,bordercolor=red,#1]{#2}}
\newcommandx{\info}[2][1=]{\todo[linecolor=OliveGreen,backgroundcolor=OliveGreen!25,bordercolor=OliveGreen,#1]{#2}}
\newcommandx{\improvement}[2][1=]{\todo[linecolor=gray,backgroundcolor=gray!25,bordercolor=gray,#1]{#2}}
\newcommandx{\thiswillnotshow}[2][1=]{\todo[disable,#1]{#2}}
\begin{document}
%
\title{Stable filtering procedures for nodal discontinuous Galerkin methods}
\titlerunning{Explicit stable in time filtering for nodal DG methods}
\author{Jan Nordstr\"{o}m \and Andrew R. Winters}
\institute{J. Nordstr\"{o}m \at Department of Mathematics; Computational Mathematics, Link\"{o}ping University, SE-581 83 Link\"{o}ping, Sweden\\
Department of Mathematics and Applied Mathematics, University of Johannesburg, P.O. Box 524, Auckland Park 2006, Johannesburg, South Africa\\
\email{jan.nordstrom@liu.se} \and A. R. Winters \at Department of Mathematics; Computational Mathematics, Link\"{o}ping University, SE-581 83 Link\"{o}ping, Sweden\\ \email{andrew.ross.winters@liu.se}}
\date{Received: date / Accepted: date}

\maketitle

\begin{abstract}
We prove that the most common filtering procedure for nodal discontinuous Galerkin (DG) methods is stable. The proof exploits that the DG approximation is constructed from polynomial basis functions and that integrals are approximated with high-order accurate Legendre-Gauss-Lobatto quadrature. The theoretical discussion serves to re-contextualize stable filtering results for finite difference methods into the DG setting. It is shown that the stability of the filtering is equivalent to a particular contractivity condition borrowed from the analysis of so-called transmission problems. As such, the temporal stability proof relies on the fact that the underlying spatial discretization of the problem possesses a semi-discrete bound on the solution. Numerical tests are provided to verify and validate the underlying theoretical results.
\end{abstract}

\keywords{Discontinuous Galerkin \and Filtering \and Stability in time \and Transmission problem}

\section{Introduction}

High frequency errors are always present in numerical simulations due to inaccuracies of the spatial derivatives at high-wave numbers. These errors can be exacerbated due to aliasing errors or steep gradients in the solution which can exist (or arise) for solutions of partial differential equations (PDEs) with predominantly hyperbolic character. To combat such errors, one technique is to use a filter operator that removes high wave number oscillations, e.g. \cite{chaudhur2017,Gassner:2013qf,Hestahven:1008th,meister2012}. The filtering procedure is separate from the scheme itself, and is often done in an ad hoc fashion (filtering is applied ``as little as possible, but as much as needed'').

There exist a multitude of numerical methods to approximate the solution of hyperbolic time-dependent PDEs. The family of discontinuous Galerkin (DG) spectral methods is well-suited for hyperbolic problems due to its high-order nature and ability to propagate waves accurately over long times \cite{Airnsworth2004}. In particular, nodal collocation DG methods are attractive because of their computational efficiency \cite{Kopriva:2009nx}. Additionally, if a nodal DG method is constructed on the Legendre-Gauss-Lobatto (LGL) nodes then the discrete differentiation and integration operators satisfy a summation-by-parts (SBP) property \cite{kreiss1974finite,kreiss1977,strand1994,svard2014} for any polynomial order \cite{gassner_skew_burgers}. This allows for semi-discrete stability estimates to be constructed for such high-order nodal DG methods, e.g \cite{carpenter_esdg,Gassner:2013ij}.

Recently, Lundquist and Nordstr\"{o}m \cite{lundquist2020stable} removed the ad hoc nature of filtering in the context of finite difference methods. Therein, they discuss a contractivity condition on the explicit filter matrix by re-framing the filtering procedure as a transmission problem \cite{nordstrom2018well}. Further, the work in \cite{lundquist2020stable} develops a necessary condition on the existence of an auxiliary filter matrix and how it must be related to the particular discrete integration (quadrature). Also, an implicit implementation of filtering was proved to be stable, i.e. sufficiency was obtained. The explicit implementation was more easily implemented, slightly more accurate and numerically shown to be stable but a proof was not obtained. 


The goal of the present work is to re-interpret the theoretical work from \cite{lundquist2020stable} into the nodal DG context. In doing so, we will remove the ad hoc nature of the nodal DG filtering procedure and prove that the commonly used explicit filter technique from the spectral community \cite{don1994numerical,hesthaven2008filtering,vandeven1991} is stable in time. Just as in the case of finite difference methods, the temporal stability of the filtering for nodal DG relies upon the existence of a high-accuracy auxiliary filter matrix as well as a semi-discrete bound on the solution.

The remainder of the paper is organized as follows: Section \ref{sec:DGOverview} provides an overview of the nodal DG method and the commonly used filtering procedure. Then, Section \ref{sec:stableFilterProof} generalizes the theoretical time stability results from \cite{lundquist2020stable} into the DG context. Numerical results that support and verify the theoretical findings are given in Section \ref{sec:numResults}. Our concluding remarks and outlook are given in the final section. 

\section{Overview of nodal DG approximations}\label{sec:DGOverview}

Discontinuous Galerkin methods are principally designed to approximate solutions of hyperbolic conservation laws \cite{Hestahven:1008th,Kopriva:2009nx}. Here, we consider such a conservation law in one spatial dimension
\begin{equation}\label{eq:linearAdvection}
\pderivative{u}{t} + \pderivative{f}{x} = 0,\qquad x \in [x_L,x_R],
\end{equation}
where $u\equiv u(x,t)$ is the solution and $f\equiv f(u)$ is the flux function. The conservation law is then equipped with an initial condition $u(x,0)\equiv u_{\text{ini}}(x)$ and suitable boundary condition(s). Two prototypical examples of conservation laws are the linear advection equation and Burgers' equation whose corresponding flux functions are
\begin{equation}
\text{linear advection: }\, f(u) = a(x,t) u,\qquad\text{Burgers': }\, f(u) = \frac{u^2}{2}.
\end{equation}

Next, we provide the broad strokes to arrive at the nodal DG approximation of the conservation law \eqref{eq:linearAdvection}. First, we transform the domain $[x_L,x_R]$ to the reference interval $[-1,1]$. To do so, we apply the mapping
\begin{equation}
\xi(x) = 2\left(\frac{x-x_L}{x_R-x_L}\right) - 1,\quad\textrm{such that}\quad \xi\in[-1,1],
\end{equation}
and rewrite the conservation law in the computational coordinate:
\begin{equation}\label{eq:mappedLinAdv}
\pderivative{u}{t} + \pderivative{f}{x} = 0 \qquad\Rightarrow\qquad  \frac{\Delta x}{2}\pderivative{u}{t} + \pderivative{f}{\xi} = 0.
\end{equation}

\subsection{The numerical scheme}

The DG approximation is built from the weak form of the mapped equation \eqref{eq:mappedLinAdv}. We list the nodal DG approximation steps below with full details given in \cite{Kopriva:2009nx}:  
\begin{enumerate}
\item Multiply by a test function $\varphi$ and integrate over the reference domain.
\item Integrate-by-parts once and resolve discontinuities at the physical boundaries with a numerical flux function $f^*(u^L,u^R)$.
\item Integrate-by-parts again to obtain \textit{strong form DG}. 
\item Approximate the solution and flux with nodal polynomials of degree $N$ written in the Lagrange basis, e.g.
\begin{equation}
u(x,t) \approx U(x,t) = \sum_{j=0}^NU_j(t)\ell_j(\xi)
\end{equation}
where the interpolation nodes are taken to be the $N+1$ Legendre-Gauss-Lobatto (LGL) nodes.
\item Select the test function to be the Lagrange polynomials $\varphi = \ell_i(\xi)$ with $i=0,\ldots,N$.
\item Approximate integrals with LGL quadrature such that the DG scheme is collocated. 
\item Arrive at the semi-discrete approximation that can be integrated in time.
\end{enumerate}

The resulting strong form, nodal DG approximation is then
\begin{equation}\label{eq:semiDG}
\frac{\Delta x}{2}\dot{\Nvec{U}} + \Dmat\,\Nvec{F} + \Mmat^{-1}\Bmat\left(\Nvec{F}^* - \Nvec{F}\right) = 0,
\end{equation}
where
\begin{equation}\label{eq:vecNotation}
\Nvec{U} = \left[U_0,U_1,\ldots,U_N\right]^T
\end{equation}
is the vector form for the degrees of freedom. The matrices in the semi-discrete form \eqref{eq:semiDG} are the discrete derivative matrix \cite{Kopriva:2009nx}
\begin{equation}\label{eq:discDer}
\Dmat_{ij} = \ell'_j(\xi_i),\quad i,j = 0,\ldots,N,
\end{equation}
the matrix $\Mmat$ containing the LGL quadrature weights and the boundary matrix $\Bmat$ given by
\begin{equation}\label{eq:massBndyMats}
\Mmat = \text{diag}(\omega_0,\ldots,\omega_N)\qquad\text{and}\qquad\Bmat = \text{diag}(-1,0,\ldots,0,1).
\end{equation}
There exist several numerical flux functions depending on the particular mathematical flux function $f(u)$, see Toro \cite{toro2009} for details. A general (and simple) numerical flux which we use in the present work is the local Lax-Friedrichs flux
\begin{equation}
F^*(U^L,U^R) = \frac{1}{2}\left(F^L + F^R\right) - \frac{\lambda_{\max}}{2}\left(U^R - U^L\right),
\end{equation}
where $\lambda_{\max}$ is the maximum wave speed of the flux Jacobian.

Two features to note for this flavour of nodal DG approximation are: The diagonal mass matrix $\Mmat$ denotes a quadrature rule that is exact for polynomials up to degree $2N-1$. So, there is equality between the continuous and discrete integral
\begin{equation}\label{eq:equalityDiscCont}
\iprodN{V,W} = \Nvec{V}^T\,\Mmat\,\Nvec{W} = \sum_{i=0}^N V_i\Mmat_{ii}W_i = \int\limits_{-1}^1 v\,w\,\mathrm{d}\xi = \iprod{v,w}
\end{equation}
provided the product of the functions $v$ and $w$ are polynomials of degree $\leq 2N-1$. 
From \eqref{eq:equalityDiscCont} the continuous and discrete $L_2$ norms are denoted
\begin{equation}\label{eq:discNorm}
\iprod{v,v} = \inorm{v}^2\quad\text{and}\quad\iprodN{V,V} = \inormN{V}^2.
\end{equation}
The mass and derivative matrices of the LGL collocation DG scheme \eqref{eq:semiDG} form a summation-by-parts (SBP) operator pair \cite{kreiss1974finite,kreiss1977,strand1994,svard2014} for any nodal polynomial order $N$ \cite{gassner_skew_burgers}, i.e.
\begin{equation}\label{eq:SBP}
\Mmat\,\Dmat + (\Mmat\,\Dmat)^T = \Bmat.
\end{equation}
From the SBP property \eqref{eq:SBP}, stable versions of the nodal DG method can be constructed, e.g. \cite{carpenter_esdg,chan2018,gassner_skew_burgers,Gassner:2016ye}.
\subsection{Construction of filtering for nodal DG}\label{sec:standardDG}

In the context of DG methods, the general idea of filtering exploits that the polynomial representation of the function $U$ is unique, and hence can be written in terms of other basis polynomial functions. Basically, the filtering procedure is:
\begin{enumerate}
\item Transform the coefficients of the nodal approximation into a modal set of basis functions, e.g., the orthogonal Legendre polynomials $\{L_j(\xi)\}_{j=0}^N$
\begin{equation}
U(\xi,t) = \sum_{j=0}^N U_j(t)\ell_j(\xi) = \sum_{j=0}^N\widetilde{U}_j(t)L_j(\xi).
\end{equation}
\item Because the modal basis is hierarchical, it is straightforward for one to perform a cutoff in modal space to filter higher order modes.
\item Transform the filtered modal solution coefficients back to the nodal Lagrange basis.
\end{enumerate}

At present, we select the modal (normalized) Legendre basis polynomials $\{L_j(\xi)\}_{j=0}^N$ to construct the filtering. It is straightforward to compute the Vandermonde matrix $\Vmat$ associated with the LGL nodal interpolation nodes $\{\xi_i\}_{i=0}^N$
\begin{equation}\label{eq:Vandermonde}
\Vmat_{ij} = L_j(\xi_i),\quad i,j = 0,\ldots,N,
\end{equation}
which allows us to transform the nodal degrees of freedom, $\{U_i\}_{i=0}^N$, to modal degrees of freedom, $\{\widetilde{U}_j\}_{j=0}^N$ and vice versa
\begin{equation}
\Nvec{U} = \Vmat\Nvec{\widetilde{U}}\quad\text{and}\quad\Nvec{\widetilde{U}} = \Vinvmat\Nvec{U}.
\end{equation}
The filtering matrix is then constructed as \cite{vandeven1991}
\begin{equation}
\Cmat_{ij} = \delta_{ij}\sigma_i,\quad i,j=0,\ldots,N,
\end{equation}
where $\Cmat$ is a diagonal modal cutoff matrix. The conditions the filter function $\sigma(\eta)$, as defined by Vandeven \cite{vandeven1991}, are 
\begin{itemize}
\renewcommand\labelitemi{\textbullet}
\item $\sigma: \mathbb{R}^+ \mapsto[0,1]$
\item $\sigma(\eta)$ must have $s$ continuous derivatives where
\begin{equation}\label{eq:Vandevenfilter}
\begin{cases}
\sigma(0) = 1&\\[0.05cm]
\sigma^{(k)}(0) = 0,& k = 1,\ldots,s-1\\[0.05cm]
\sigma(\eta) = 0,& \eta\geq 1\\[0.05cm]
\sigma^{(k)}(1) = 0, & k = 1,\ldots,s-1.
\end{cases}
\end{equation}
\end{itemize}

In the nodal DG community a typical choice is an exponential filter function, e.g. \cite{chaudhur2017,Gassner:2013qf,Hestahven:1008th}, to define the coefficients
\begin{equation}\label{eq:filterSigmas}
\sigma_i = \begin{cases}
1 & \textrm{if }0 \leq i \leq N_c -1\\[0.1cm]
\exp\left(-\alpha\left(\frac{i+1-N_c}{N+1-N_c}\right)^s\right) & \textrm{if }N_c \leq i \leq N
\end{cases}
\end{equation}
where $\alpha$, $s$ and $N_c$ are the filter parameters. The value $N_c$ indicates the number of the unaffected modes, $\alpha$ is chosen such that $\exp(-\alpha)$ is machine epsilon and $s$ is an even number determining the order (sometimes referred to as the strength) of the filter. Two common choices for the filtering parameters are to take $\alpha = 36$, $N_c = 4$ and the filter strength is either ``strong'' with $s = 16$ or ``weak'' with $s = 32$ \cite{chaudhur2017,Gassner:2013qf,Hestahven:1008th}. 

For any choice of the filter parameters the filter coefficients are constructed such that $0\leq\sigma_i\leq 1$. Note, this exponential filter does not strictly adhere to Vandeven's definition of the filter function \eqref{eq:filterSigmas}, but it does so in practice by choosing $\alpha$ such that $\sigma(1)$ is below machine accuracy \cite{canuto2006}.

In summary, the filter matrix for the nodal DG approximation is given as
\begin{equation}\label{HWFilter}
\Filtermat = \Vmat\,\Dmat\,\Vinvmat.
\end{equation}
The filter of the form \eqref{HWFilter} retains the high-order accuracy of the nodal DG approximation for smooth functions \cite{hesthaven2008filtering,vandeven1991} as shown numerically in Section~\ref{sec:conv}.

\begin{rem}
Filtering have often been used as a stabilization technique for numerical methods such as in finite difference  \cite{kennedy1997,yee1999} as well as discontinuous Galerkin \cite{chaudhur2017,Gassner:2013qf,Hestahven:1008th} methods. We strongly advise against such use. Instead, one should first construct construct a (provably) stable numerical scheme. After this, the solution quality can be addressed and cleaned-up, possibly using filtering.
\end{rem}

\section{Stability}\label{sec:stableFilterProof}

As previously mentioned, it is possible to develop semi-discrete stability estimates for the nodal DG approximation via the SBP property \eqref{eq:SBP}. 
The filtering is a separate procedure which changes the approximate solution during the time integration procedure, for example after each explicit time step or even after each stage of a Runge-Kutta method. Here, we explore what influence this filtering step has on the stability estimate for the nodal DG approximation.

To discuss the filtering procedure and its affect on stability we re-interpret the work on provably stable filtering from \cite{lundquist2020stable} into the nodal DG context. 
In a broad sense, with homogeneous boundary conditions, semi-discrete stability ensures that the discrete norm of the approximate solution is bounded by the discrete norm of the initial conditions, see \cite{Nordstrom:2016jk} for complete details. For the nodal DG approximation such a stability statement takes the form
\begin{equation}
\inormN{U(t)} \leq \inormN{U_{\text{ini}}},
\end{equation}
where $U_{\text{ini}}$ is the initial condition evaluated at the LGL nodes.

Pursuant to the work \cite{lundquist2020stable}, we view the application of a filter matrix to a discrete solution at some intermediate time $t_1$ as a transmission problem \cite{nordstrom2018well}:
\begin{equation}\label{eq:transmissionProblem}
\begin{aligned}
\Nvec{U}_t + \mathbb{D}(\Nvec{U}) &= 0,\qquad 0 \leq t \leq t_1\\[0.1cm]
\Nvec{V}_t + \mathbb{D}(\Nvec{V}) &= 0,\qquad t \geq t_1\\[0.1cm]
\Nvec{U}(0) &= \Nvec{U}_{\text{ini}}\\[0.1cm]
\Nvec{V}(t_1) &= \Filtermat\Nvec{U}(t_1)\\[0.1cm]
\end{aligned}
\end{equation}
where the operator $\mathbb{D}$ contains the derivative matrix $\Dmat$ as well as the boundary conditions. For the present discussion, the filtering stated in the final line of \eqref{eq:transmissionProblem} is performed in an explicit fashion.

For stability it must hold that the filter is \textit{contractive}, i.e.
\begin{equation}\label{eq:discNormContract}
\inormN{V(t_1)} \leq \inormN{U(t_1)}.
\end{equation}
In turn, this contractive property guarantees that the filter procedure is \textit{stable} because
\begin{equation}
\inormN{V(t_1)} \leq \inormN{U(t_1)} \leq \inormN{U_{\text{ini}}}.
\end{equation}

The contractivity property in the discrete norm \eqref{eq:discNormContract} then implies that the following contractivity condition on the explicit filter matrix $\Filtermat$ must hold
\begin{equation}\label{eq:contract}
\Filtermat^T\,\Mmat\,\Filtermat - \Mmat \leq 0.
\end{equation}
This contractive matrix property was first identified in \cite{nordstrom2018well} for stable transmission problems. The contractivity condition \eqref{eq:contract} expresses a precise interplay between the filter matrix and the mass matrix. As demonstrated in \cite{lundquist2020stable}, a necessary condition for the explicit filter matrix to satisfy \eqref{eq:contract} is that an auxiliary filter matrix
\begin{equation}\label{eq:auxFilter}
\explicitFiltermat = \Mmat^{-1}\Filtermat^T\Mmat,
\end{equation}
exists and possesses the same accuracy as $\Filtermat$. The accuracy requirement on $\explicitFiltermat$ is necessary since otherwise \eqref{eq:contract} is provably indefinite \cite{lundquist2020stable}. 


We will show that the auxiliary filter matrix \eqref{eq:auxFilter} is identical to the original filter matrix \eqref{HWFilter} for the LGL nodal DG approximation. Furthermore, the filter matrix $\Filtermat$ indeed satisfies the contractivity condition \eqref{eq:contract}. Both results require the following Lemma.
\begin{lemma}\label{lem:quad}
The matrix product $\Vmat^T\Mmat\,\Vmat$ is the LGL quadrature rule applied to the (normalized) Legendre polynomial functions $\{L_j(\xi)\}_{j=0}^N$ and results in a diagonal matrix
\begin{equation}\label{eq:quadMatDef}
\Vmat^T\Mmat\,\Vmat = \textnormal{diag}\left(1,1,\ldots,1,2 + \frac{1}{N}\right) \coloneqq \quadmat.
\end{equation}
\end{lemma}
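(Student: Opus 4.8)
The plan is to compute the $(i,j)$ entry of $\Vmat^T\Mmat\,\Vmat$ directly from the definitions and recognize it as an LGL quadrature sum. By \eqref{eq:Vandermonde} and \eqref{eq:massBndyMats} we have
\begin{equation}
\left(\Vmat^T\Mmat\,\Vmat\right)_{ij} = \sum_{k=0}^N \Vmat_{ki}\,\omega_k\,\Vmat_{kj} = \sum_{k=0}^N \omega_k\, L_i(\xi_k)\,L_j(\xi_k) = \iprodN{L_i,L_j},
\end{equation}
which is precisely the LGL quadrature rule applied to the product $L_i L_j$. This establishes the first claim of the lemma, that the matrix product realizes the quadrature rule on the Legendre basis.

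The next step is to evaluate this quadrature sum using the exactness property \eqref{eq:equalityDiscCont}, namely that LGL quadrature with $N+1$ nodes integrates polynomials of degree $\leq 2N-1$ exactly. For $i,j \leq N$ with $i+j \leq 2N-1$, the integrand $L_i L_j$ has degree $\leq 2N-1$, so the quadrature is exact and returns $\iprod{L_i,L_j} = \delta_{ij}$ by the orthonormality of the normalized Legendre polynomials on $[-1,1]$. This covers every entry except the single diagonal entry $i = j = N$, where the integrand $L_N^2$ has degree $2N$ and exactness fails.

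The main obstacle, and the only genuinely computational step, is therefore the evaluation of the last diagonal entry $\iprodN{L_N,L_N} = \sum_{k=0}^N \omega_k\, L_N(\xi_k)^2$. The standard way to handle this is to invoke the known closed-form expression for the LGL quadrature weights, $\omega_k = \frac{2}{N(N+1)}\frac{1}{\bigl(L_N^{\text{\,unnorm}}(\xi_k)\bigr)^2}$ (using the unnormalized Legendre polynomial with $L_N^{\text{\,unnorm}}(\pm 1) = (\pm 1)^N$), together with the normalization factor relating $L_N$ to $L_N^{\text{\,unnorm}}$. Substituting these gives $\omega_k\, L_N(\xi_k)^2 = \frac{2N+1}{N(N+1)}$ at each of the $N+1$ nodes, so the sum equals $(N+1)\cdot\frac{2N+1}{N(N+1)} = \frac{2N+1}{N} = 2 + \frac{1}{N}$, which is the claimed value.

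Putting the three pieces together — the identification of the matrix product with the quadrature rule, exactness plus orthonormality for all entries with $i+j \leq 2N-1$, and the explicit computation of the $(N,N)$ entry — yields $\Vmat^T\Mmat\,\Vmat = \textnormal{diag}\left(1,\ldots,1,2+\tfrac{1}{N}\right)$, completing the proof. An alternative to the explicit-weight computation, should one wish to avoid quoting the weight formula, is to note that $L_N$ evaluated at the LGL nodes is proportional to the vector that the quadrature "misses," and to pin down the constant $2+\tfrac1N$ via the exactness defect $\iprodN{L_N,L_N} - \iprod{L_N,L_N}$; but the direct substitution above is the cleanest route.
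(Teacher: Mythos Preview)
Your proof is correct and follows the same overall structure as the paper: identify the entries of $\Vmat^T\Mmat\,\Vmat$ as discrete inner products $\iprodN{L_i,L_j}$, invoke exactness of LGL quadrature for degree $\leq 2N-1$ together with orthonormality to handle every entry except $(N,N)$, and then treat that last diagonal entry separately.

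The only substantive difference is in how the $(N,N)$ entry is evaluated. The paper appeals to a cited norm-equivalence result for LGL quadrature, namely $\inormN{\phi} = \sqrt{2+1/N}\,\inorm{\phi}$ for polynomials $\phi$ of degree $N$, and then reads off $\inormN{L_N}^2 = 2+1/N$ since $L_N$ is normalized. You instead compute the sum directly by substituting the closed-form LGL weight formula $\omega_k = \tfrac{2}{N(N+1)}\bigl(L_N^{\text{\,unnorm}}(\xi_k)\bigr)^{-2}$ and the normalization constant, obtaining the same value. Your route is more self-contained and avoids an external citation, at the cost of quoting the explicit weight formula; the paper's route is terser but shifts the work into the reference. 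Either is perfectly acceptable here.
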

\begin{proof}
The entries of this matrix product in terms of discrete inner products is
\begin{equation}\label{eq:quadVandermonde}
\Vmat^T\Mmat\,\Vmat = \begin{bmatrix}
\inormN{L_0}^2 & \iprodN{L_0,L_1} & \cdots & \iprodN{L_0,L_N} \\[0.05cm]
\iprodN{L_1,L_0} & \inormN{L_1}^2 & \cdots & \iprodN{L_1,L_N} \\[0.05cm]
\vdots & \vdots & \ddots & \vdots\\[0.05cm]
\iprodN{L_N,L_0} & \iprodN{L_N,L_1} & \cdots & \inormN{L_N}^2 \\[0.05cm]
\end{bmatrix}.
\end{equation}
From the accuracy of the LGL quadrature and the fact that $\{L_j(\xi)\}_{j=0}^N$ are polynomials, we have equality between the discrete and continuous inner products
\begin{equation}\label{eq:LegendreInner}
\iprodN{L_j,L_k} = \iprod{L_j,L_k} = \inorm{L_j}^2\delta_{jk} = \delta_{jk},
\end{equation}
provided $j+k \leq 2N-1$. The result above utilizes that the Legendre basis is orthonormal. The quadratures in \eqref{eq:quadVandermonde} are therefore exact for all inner products except the one related to $L_N$ with itself since $2N > 2N-1$. Thus,
\begin{equation}\label{eq:preK}
\Vmat^T\Mmat\,\Vmat = \text{diag}(1,1,\ldots,1,\inormN{L_N}^2).
\end{equation}
The discrete and continuous norms are equivalent \cite{canuto2006}. Provided that $\phi$ is a polynomial of degree $N$, the discrete and continuous $L^2$ norms are related by $\inormN{\phi} = \sqrt{2 + 1/N}\inorm{\phi}$ for the LGL quadrature  \cite{ISI:A1982NE30900005} .
Using this fact, \eqref{eq:preK} becomes the diagonal matrix $\quadmat$.
\end{proof}

We can now prove
\begin{proposition}\label{prop:auxFilter}
The auxiliary filter is identical to the DG filter matrix, i.e. $\explicitFiltermat = \Filtermat$.
\end{proposition}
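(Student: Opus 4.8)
The plan is to compute $\explicitFiltermat=\Mmat^{-1}\Filtermat^T\Mmat$ directly from the factorization $\Filtermat=\Vmat\,\Cmat\,\Vinvmat$, where $\Cmat$ is the diagonal modal cutoff matrix, and then use Lemma~\ref{lem:quad} to collapse the resulting matrix product. Since $\Cmat$ is diagonal, $\Cmat^T=\Cmat$, so $\Filtermat^T=\Vinvmat^T\,\Cmat\,\Vmat^T$ and hence
\begin{equation*}
\explicitFiltermat=\Mmat^{-1}\Vinvmat^T\,\Cmat\,\Vmat^T\Mmat.
\end{equation*}
The goal is then to rewrite the outer factors $\Mmat^{-1}\Vinvmat^T$ and $\Vmat^T\Mmat$ using $\Vmat$, $\Vinvmat$, and the diagonal matrix $\quadmat$ from Lemma~\ref{lem:quad}.

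From $\Vmat^T\Mmat\,\Vmat=\quadmat$ I would extract two identities. Right-multiplying by $\Vinvmat$ gives $\Vmat^T\Mmat=\quadmat\,\Vinvmat$; transposing this and using $\Mmat^T=\Mmat$, $\quadmat^T=\quadmat$ gives $\Mmat\,\Vmat=\Vinvmat^T\,\quadmat$, i.e. $\Mmat^{-1}\Vinvmat^T=\Vmat\,\quadmat^{-1}$ (here $\Mmat$ is invertible because the LGL weights are positive, $\Vmat$ because the interpolation nodes are distinct, and $\quadmat$ is visibly invertible). Substituting both identities yields
\begin{equation*}
\explicitFiltermat=\bigl(\Vmat\,\quadmat^{-1}\bigr)\,\Cmat\,\bigl(\quadmat\,\Vinvmat\bigr)=\Vmat\,\bigl(\quadmat^{-1}\,\Cmat\,\quadmat\bigr)\,\Vinvmat.
\end{equation*}

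To close the argument I would observe that $\quadmat$ and $\Cmat$ are both diagonal and therefore commute, so $\quadmat^{-1}\,\Cmat\,\quadmat=\Cmat$ and $\explicitFiltermat=\Vmat\,\Cmat\,\Vinvmat=\Filtermat$. I do not anticipate a genuine obstacle here; the argument is essentially careful bookkeeping of the transposes and inverses of $\Vmat$, and the single structural fact that makes it work is the diagonality asserted in Lemma~\ref{lem:quad} — it is precisely because $\Vmat^T\Mmat\,\Vmat$ is diagonal, and not merely symmetric, that the conjugating factor $\quadmat$ drops out at the end rather than leaving behind a nontrivial similarity transform of $\Cmat$.
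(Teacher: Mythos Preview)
Your proof is correct and follows essentially the same route as the paper's: both arguments substitute the factorization $\Filtermat=\Vmat\,\Cmat\,\Vinvmat$, invoke Lemma~\ref{lem:quad} to replace $\Vmat^T\Mmat\,\Vmat$ by the diagonal matrix $\quadmat$, and conclude by using that $\quadmat$ and $\Cmat$ commute. The only cosmetic difference is that the paper computes the difference $\explicitFiltermat-\Filtermat$ and shows it vanishes, whereas you compute $\explicitFiltermat$ directly and identify it with $\Filtermat$.
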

\begin{proof}
We examine the difference between the two filter matrices
\begin{equation}
\explicitFiltermat - \Filtermat = \Mmat^{-1}\Filtermat^T\Mmat - \Filtermat = \Mmat^{-1}\mathcal{V}^{-T}\Cmat\,\Vmat^T\Mmat - \Vmat\,\Cmat\,\Vinvmat.
\end{equation}
Next, we factor out the matrix $\Vmat$ on the left and $\Vinvmat$ on the right to have
\begin{equation}
\begin{aligned}
\explicitFiltermat - \Filtermat &=\Vmat\left[\left(\Vinvmat\Mmat^{-1}\mathcal{V}^{-T}\right)\Cmat\left(\Vmat^T\Mmat\Vmat\right) - \Cmat\right]\Vinvmat\\[0.15cm]
&=\Vmat\left[\left(\Vmat^T\Mmat\Vmat\right)^{-1}\Cmat\left(\Vmat^T\Mmat\Vmat\right) - \Cmat\right]\Vinvmat.
\end{aligned}
\end{equation}
Applying the result from Lemma~\ref{lem:quad} gives
\begin{equation}
\explicitFiltermat - \Filtermat =\Vmat\left[\quadmat^{-1}\Cmat\quadmat - \Cmat\right]\Vinvmat =\Vmat\left[\quadmat^{-1}\quadmat\Cmat - \Cmat\right]\Vinvmat = 0,
\end{equation}
where we use that the matrices $\Cmat$ and $\quadmat$ are diagonal to obtain the desired result.
\end{proof} 
\begin{rem}
The accuracy of the filter $\Filtermat$ lies entirely in the filter function $\sigma$ used to create the diagonal entries in the matrix $\Cmat$, as shown by Vandeven \cite{vandeven1991}. 
\end{rem}

The following result is then self-evident.

\begin{corollary}
The auxiliary filter $\explicitFiltermat$ exists and is as accurate as $\Filtermat$.
\end{corollary}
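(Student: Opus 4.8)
The plan is to obtain the Corollary as an immediate consequence of Proposition~\ref{prop:auxFilter} together with the Remark that precedes it. First I would note that the auxiliary filter $\explicitFiltermat = \Mmat^{-1}\Filtermat^T\Mmat$ is well-defined: the mass matrix $\Mmat = \textnormal{diag}(\omega_0,\ldots,\omega_N)$ collects the LGL quadrature weights, all of which are strictly positive, so $\Mmat^{-1}$ exists and the product $\Mmat^{-1}\Filtermat^T\Mmat$ is meaningful. This settles the existence claim without any computation.

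For the accuracy claim, the key observation is that Proposition~\ref{prop:auxFilter} does not merely assert that $\explicitFiltermat$ and $\Filtermat$ have comparable accuracy — it establishes the exact matrix identity $\explicitFiltermat = \Filtermat$. Two operators that coincide as matrices necessarily have identical approximation properties when applied to any nodal polynomial. Hence whatever accuracy statement holds for $\Filtermat$ — which, by the preceding Remark and the analysis of Vandeven~\cite{vandeven1991}, is governed entirely by the choice of filter function $\sigma$ entering the diagonal matrix $\Cmat$ — transfers verbatim to $\explicitFiltermat$.

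Since both assertions follow without further work, I do not anticipate any genuine obstacle; the substantive content was already carried out in Lemma~\ref{lem:quad} and Proposition~\ref{prop:auxFilter}. The only point perhaps worth stating explicitly is why the identity $\explicitFiltermat = \Filtermat$ is special to the LGL nodal DG setting: it hinges on $\Vmat^T\Mmat\,\Vmat$ being diagonal (Lemma~\ref{lem:quad}), which in turn uses that the LGL quadrature is exact for polynomials of degree $\le 2N-1$ and that the modal basis is orthonormal. For a general nodal basis, or a quadrature of lower precision, this diagonality fails, and one would then only recover the weaker necessary condition of \cite{lundquist2020stable} rather than the clean equality exploited here.
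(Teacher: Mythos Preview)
Your proposal is correct and matches the paper's own treatment: the paper declares the Corollary ``self-evident'' immediately after Proposition~\ref{prop:auxFilter} and the accompanying Remark, relying on exactly the identity $\explicitFiltermat=\Filtermat$ and the Vandeven accuracy observation that you invoke. Your additional remarks on invertibility of $\Mmat$ and on why the LGL setting is special are fine elaborations but go slightly beyond what the paper bothers to record.
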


Further, the result from Lemma~\ref{lem:quad} allows us to prove
\begin{proposition}\label{prop:contract}
The nodal DG filter matrix $\Filtermat$ is contractive in the sense of \eqref{eq:contract}. 
\end{proposition}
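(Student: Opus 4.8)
The plan is to reduce the matrix inequality $\Filtermat^T\Mmat\Filtermat - \Mmat \leq 0$ to a statement about the diagonal modal cutoff matrix $\Cmat$ and the diagonal quadrature matrix $\quadmat$ from Lemma~\ref{lem:quad}. First I would substitute the factored form $\Filtermat = \Vmat\,\Cmat\,\Vinvmat$ into the left-hand side, so that
\begin{equation}
\Filtermat^T\Mmat\Filtermat - \Mmat = \mathcal{V}^{-T}\Cmat\,\Vmat^T\Mmat\Vmat\,\Cmat\,\Vinvmat - \mathcal{V}^{-T}\left(\Vmat^T\Mmat\Vmat\right)\Vinvmat.
\end{equation}
Factoring $\mathcal{V}^{-T}$ on the left and $\Vinvmat$ on the right, and applying Lemma~\ref{lem:quad} to replace $\Vmat^T\Mmat\Vmat$ by $\quadmat$, this becomes
\begin{equation}
\Filtermat^T\Mmat\Filtermat - \Mmat = \mathcal{V}^{-T}\left(\Cmat\,\quadmat\,\Cmat - \quadmat\right)\Vinvmat.
\end{equation}

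The next step is to observe that a congruence transformation $A \mapsto B^T A B$ preserves negative semi-definiteness (here $B = \Vinvmat$, which is invertible so there is no loss), so it suffices to show that the diagonal matrix $\Cmat\,\quadmat\,\Cmat - \quadmat$ is negative semi-definite. Since $\Cmat = \text{diag}(\sigma_0,\ldots,\sigma_N)$ and $\quadmat = \text{diag}(1,\ldots,1,2+\tfrac1N)$ are both diagonal, their product is diagonal with entries $\quadmat_{ii}(\sigma_i^2 - 1)$. Each diagonal entry is therefore nonpositive precisely because $\quadmat_{ii} > 0$ for all $i$ and, by construction of the filter coefficients in \eqref{eq:filterSigmas}, $0 \leq \sigma_i \leq 1$ so that $\sigma_i^2 - 1 \leq 0$. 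Hence $\Cmat\,\quadmat\,\Cmat - \quadmat \leq 0$, and consequently $\Filtermat^T\Mmat\Filtermat - \Mmat \leq 0$.

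There is essentially no hard obstacle here — the argument is a short chain of substitutions once Lemma~\ref{lem:quad} and Proposition~\ref{prop:auxFilter} are in hand. The only point requiring a moment of care is making explicit that the congruence by $\Vinvmat$ does not alter definiteness (it is an invertible real matrix, so $x^T \mathcal{V}^{-T} D \Vinvmat x = y^T D y$ with $y = \Vinvmat x$ ranging over all of $\mathbb{R}^{N+1}$), and that the two diagonal matrices $\Cmat$ and $\quadmat$ commute so that the middle term genuinely collapses to a diagonal matrix with the stated entries. One could also remark that the inequality is strict (i.e. $\Filtermat^T\Mmat\Filtermat - \Mmat < 0$ on the complement of the unfiltered modes) exactly when some $\sigma_i < 1$, which is the generic situation, but negative semi-definiteness is all that is needed for the contractivity condition \eqref{eq:contract}.
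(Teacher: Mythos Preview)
Your argument is correct and follows essentially the same route as the paper: substitute $\Filtermat = \Vmat\Cmat\Vinvmat$, pull out $\mathcal{V}^{-T}$ and $\Vinvmat$ as a congruence, invoke Lemma~\ref{lem:quad} to replace $\Vmat^T\Mmat\Vmat$ by $\quadmat$, and reduce to a diagonal inequality. The only difference is cosmetic: the paper additionally uses $\sigma_N = 0$ from \eqref{eq:Vandevenfilter} to simplify $\Cmat\quadmat\Cmat$ to $\Cmat^2$ before subtracting $\quadmat$, whereas you keep $\Cmat\quadmat\Cmat - \quadmat$ and read off the entries $\quadmat_{ii}(\sigma_i^2-1)$ directly; your version is slightly more general since it does not require the highest mode to be clipped. (Note that your reference to Proposition~\ref{prop:auxFilter} is unnecessary---you never use it.)
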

\begin{proof}
We substitute the form of the filter matrix \eqref{HWFilter} into the contractivity condition \eqref{eq:contract} to obtain
\begin{equation}\label{eq:coerce}
\Filtermat^T\,\Mmat\,\Filtermat - \Mmat = (\Vmat\,\Cmat\,\Vinvmat)^T\,\Mmat\,(\Vmat\,\Cmat\,\Vinvmat) - \Mmat
 = (\Vinvmat)^T\Cmat\,(\Vmat^T\Mmat\,\Vmat)\,\Cmat\,\Vinvmat - \Mmat.
\end{equation}
The middle term, $\Vmat^T\Mmat\,\Vmat$, grouped above is precisely that from Lemma~\ref{lem:quad}, which gives
\begin{equation}\label{eq:coerce2}
\Filtermat^T\,\Mmat\,\Filtermat - \Mmat = (\Vinvmat)^T\Cmat\,\quadmat\,\Cmat\,\Vinvmat - \Mmat.
\end{equation}
Next, we recall that the modal cutoff matrix is diagonal with entries $\{\sigma_i\}_{i=0}^N$ and, by construction, the term $\sigma_N = 0$, see \eqref{eq:Vandevenfilter}. Thus, 
\begin{equation}\label{eq:diagCmat}
\Cmat = \text{diag}(\sigma_0,\sigma_1,\ldots,\sigma_{N-1},0).
\end{equation}
We combine this fact with the diagonal quadrature matrix result \eqref{eq:quadMatDef} to simplify the middle term of \eqref{eq:coerce2} to be $\Cmat\,\quadmat\,\Cmat = \Cmat^2$. The contractivity condition then becomes
\begin{equation}
\Filtermat^T\,\Mmat\,\Filtermat - \Mmat = (\Vinvmat)^T\Cmat^2\Vinvmat - \Mmat = (\Vinvmat)^T\left(\Cmat^2 - \quadmat\right)\Vinvmat,
\end{equation}
where we, again, apply the result from Lemma~\ref{lem:quad}. From \eqref{eq:quadMatDef} and \eqref{eq:diagCmat} we see that
\begin{equation}
\Cmat^2 - \quadmat = \text{diag}\left(\sigma^2_0-1,\sigma^2_1-1,\ldots,\sigma^2_{N-1}-1,-2-\frac{1}{N}\right) \leq 0,
\end{equation}
because $0\leq\sigma_i\leq 1$ for $i=0,\ldots,N$. 
Thus, \eqref{eq:contract} holds.
\end{proof}


\begin{rem}
The proof above holds provided the filter function is chosen such that $\sigma(\eta)\in[0,1]$ and that it ``clips'' the highest mode, i.e. $\sigma(N) = 0$, then the nodal DG filter matrix $\Filtermat$ satisfies the contractivity condition \eqref{eq:contract}. Thus, other proposed filter functions like those found in Hussaini et al. \cite{hussaini1985spectral} also produce a provably stable filter matrix.
\end{rem}

\section{Numerical results}\label{sec:numResults}

Here we apply the nodal DG filter matrix in $\Filtermat$ \eqref{HWFilter} to several test problems. For these tests we select the parameters for a ``strong'' version of the nodal DG filter $\Filtermat$ from Section~\ref{sec:standardDG}. We do so to demonstrate, in practice, the high-order accuracy of the filtered DG approximation for a smooth solution and how it performs for test cases that develops non-smooth solutions over time. To integrate the semi-discrete DG approximation \eqref{eq:semiDG} in time, we use a third-order Runge-Kutta from Williamson \cite{williamson1980}.

\subsection{High-order convergence for linear advection}\label{sec:conv}

For the convergence test we consider the linear advection equation with flux function $f(u)=a u$ and take the wave speed to be the constant $a=1$ on the domain $\Omega = [0,1]$. 

We use a smooth Gaussian pulse to set the initial and boundary conditions as well as compute errors
\begin{equation}
u(x,t) = \exp\left(-\zeta\left(x - 0.25 - t\right)^2\right),\quad\text{with}\quad \zeta = \frac{\ln(2)}{0.2^2}.
\end{equation}
We vary the polynomial degree $N$ at values in the interval $[7,64]$ and integrate up $T=0.5$ as the final time. In Fig.~\ref{fig:spec_conv} we present a semilog plot of the $L_{\infty}$ error versus the polynomial order. We see that the error decays exponentially until the errors in the approximation are dominated by the time integration. Thereafter, we halve the time step size and see that the stagnation point of the error drops by a factor of eight, as expected for a third-order time integration technique.
\begin{figure}[htbp]
\begin{center}
{
{\includegraphics[scale=0.49, trim=8 11 10 10, clip]{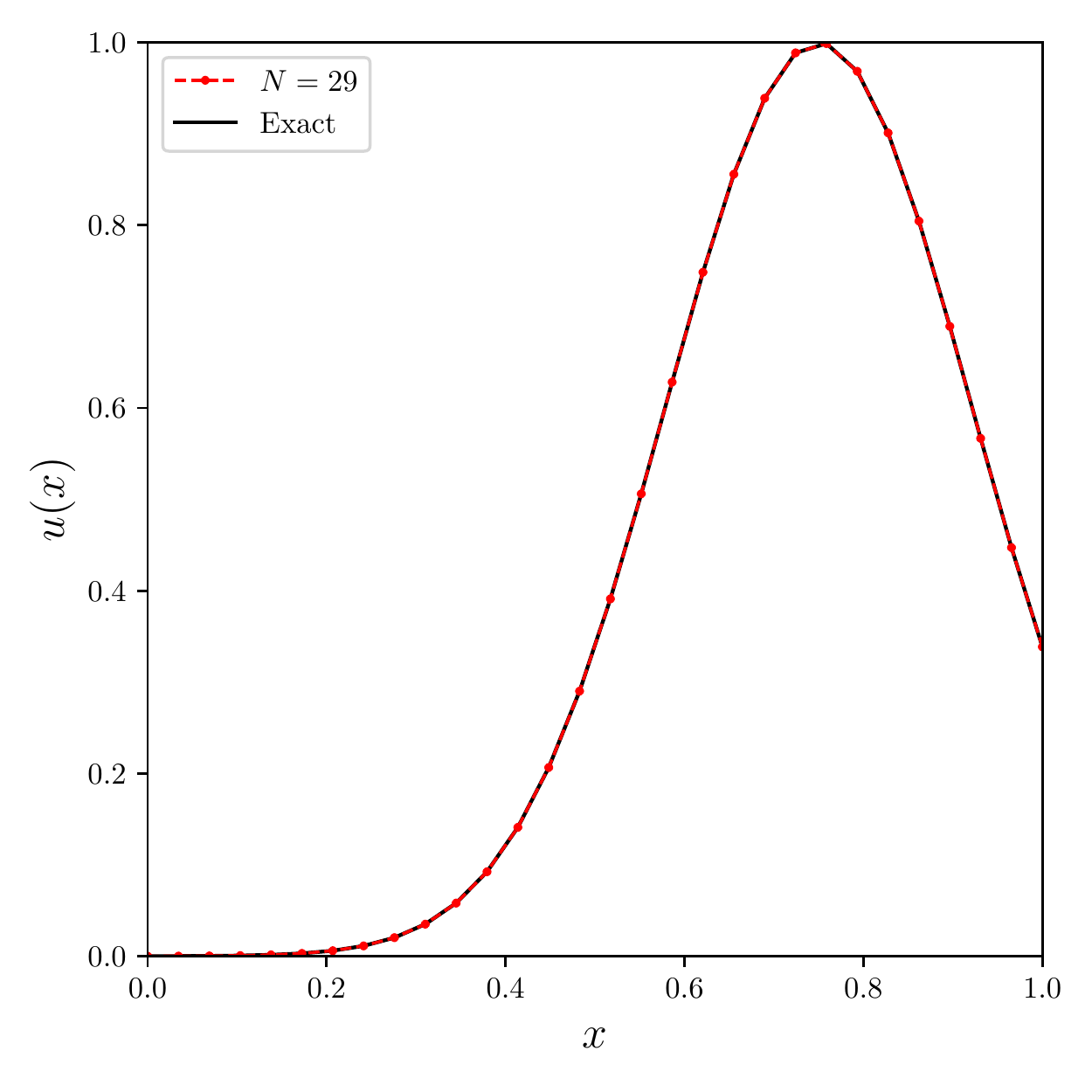}}
}
{
{\includegraphics[scale=0.34, trim=30 170 65 180, clip]{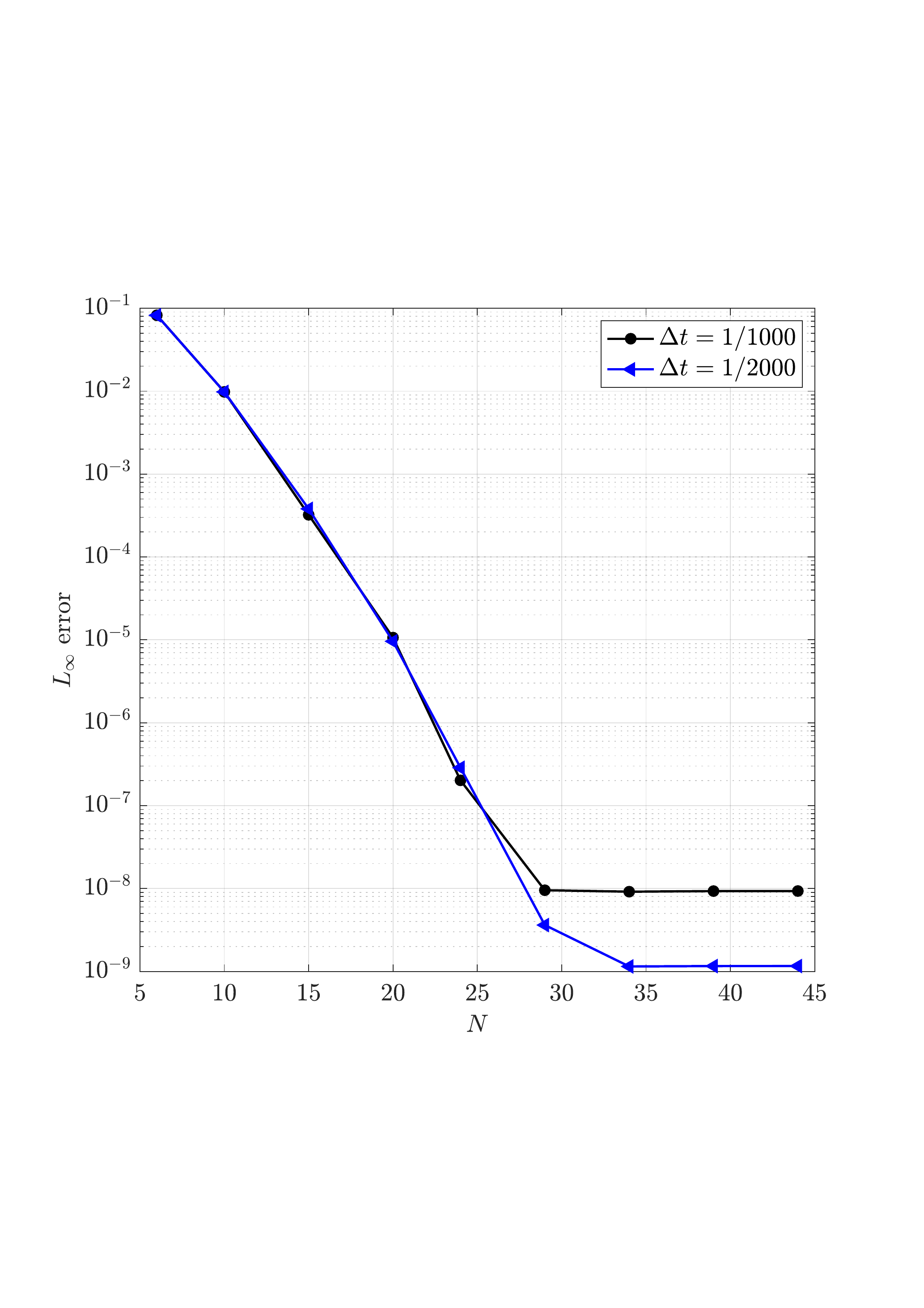}}
}
\caption{(\textit{left}) Exact and approximate solution with $N=29$ for the linear advection with smooth Gauss pulse at $T=0.5$. (\textit{right}) Space and time convergence.}
\label{fig:spec_conv}
\end{center}
\end{figure}

This experimentally demonstrates that the nodal DG approximation filtered in every time step remains high-order accurate in space and time for a smooth solution.

\subsection{Variable wave speed for linear advection}

Next, we consider a more complicated test proposed in \cite{hesthaven2008filtering}. For this case, the solution remains bounded, but develops steep gradients. Due to the high-degree polynomial approximation of the DG method, spurious oscillations can develop near these steep gradients and propagate throughout the domain, polluting the solution quality. 

We consider the linear advection problem with a variable wave speed on the domain $\Omega=[-1,1]$ written in the form
\begin{equation}\label{eq:varCoeff}
\pderivative{u}{t} + a(x)\pderivative{u}{x} = 0,\quad\text{where}\quad a(x) = \frac{1}{\pi}\sin(\pi x - 1).
\end{equation}
This wave speed remains positive at the boundaries of the domain, but it can change sign within the domain. We take the initial condition to be 
$u_{\text{ini}}(x) = \sin(\pi x)$
which gives the corresponding analytical solution \cite{gottlieb1981stability}
\begin{equation}\label{eq:varCoeffAnalytical}
u(x,t) = \sin\left(2\tan^{-1}\left(e^{-t}\tan\left(\frac{\pi x -1}{2}\right)\right) + 1\right).
\end{equation}
The solution in \eqref{eq:varCoeffAnalytical} develops a steep gradient around the point $x = (1-\pi)/\pi\approx -0.68$ before it finally decays to a constant value 
$\lim_{t\to\infty}u(x,t) = \sin(1)$.

We compare the analytical and approximate solutions at a final time $T=4$ on a single spectral element using polynomial order $N=256$ and $\Delta t = 1/2000$ as the explicit time step. The polynomial order and general setup are chosen such that a comparison can be made to the results in \cite{hesthaven2008filtering}. In Figure \ref{fig:varCoeff} we present the unfiltered approximation on the left and the filtered approximation on the right. Clearly, the unfiltered DG scheme contains spurious oscillations whereas the filtered solution suppresses such behaviour. Also, qualitatively, the results of the new DG filter scheme are very similar to those from \cite{hesthaven2008filtering}.
\begin{figure}[htbp]
\begin{center}
{
{\includegraphics[scale=0.49, trim=10 12 10 10, clip]{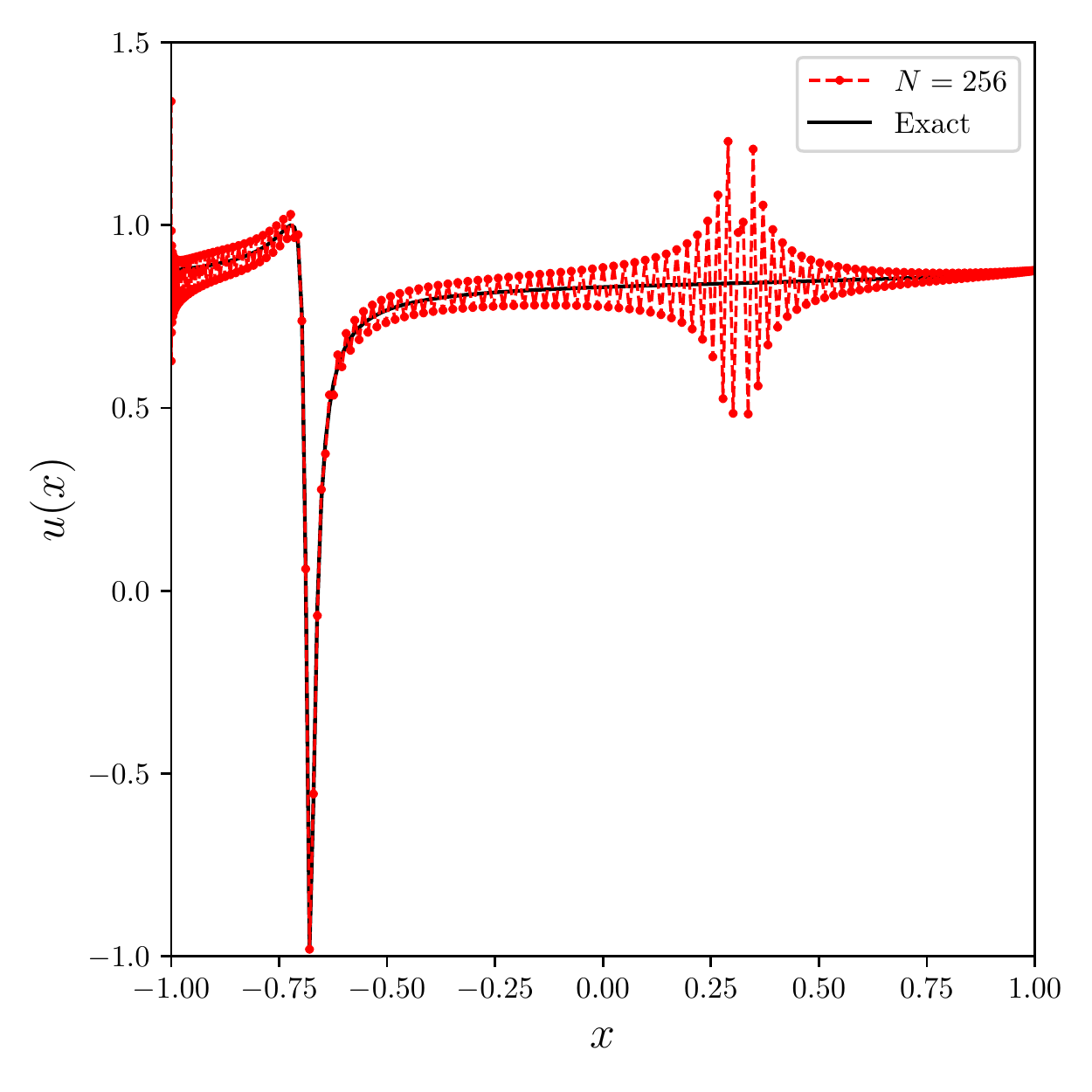}}
}
{
\includegraphics[scale=0.49, trim=35 12 10 10, clip]{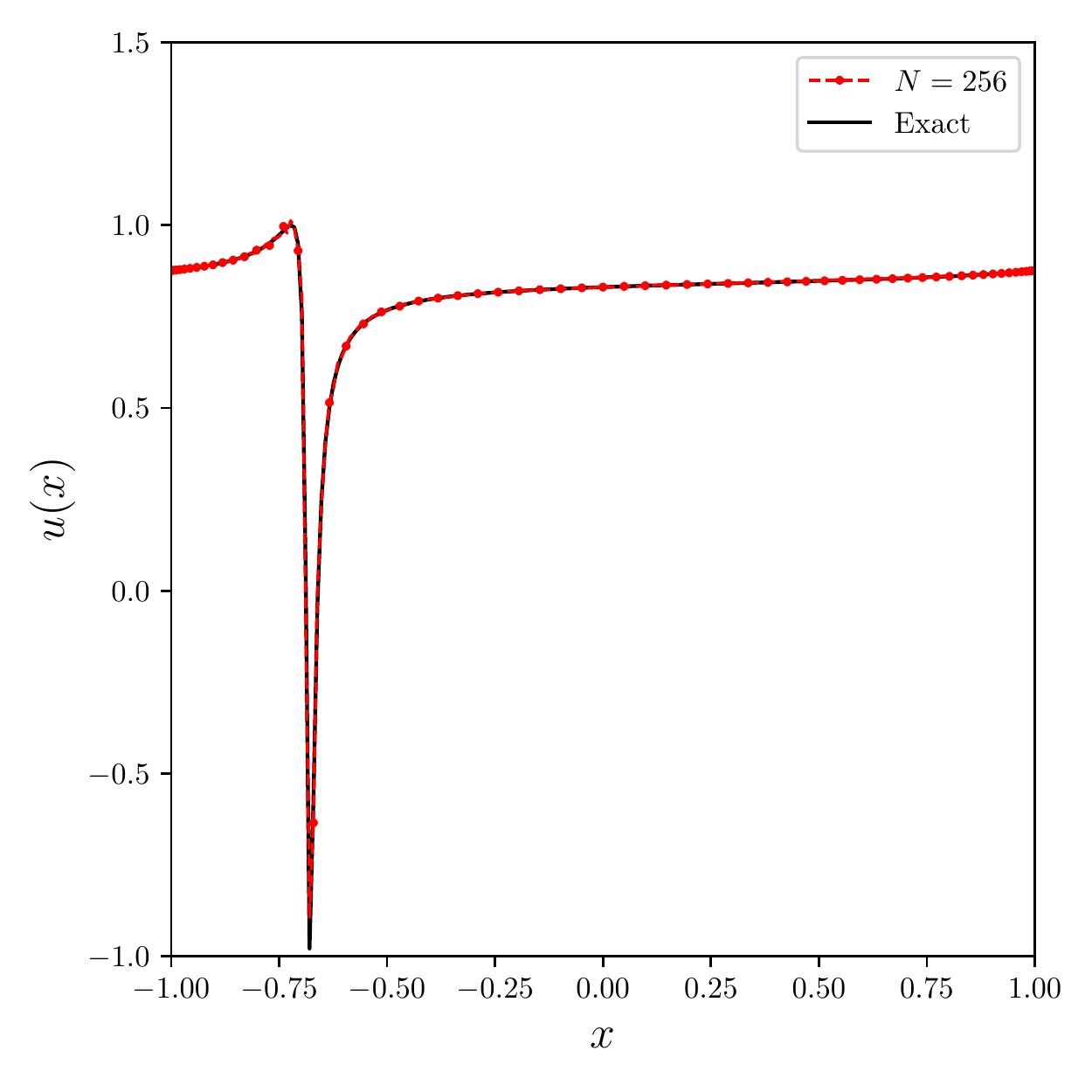}
\vspace{-0.3cm}
}
\caption{Unfiltered (\textit{left}) and filtered (\textit{right}) nodal DG solution for the variable wave speed problem \eqref{eq:varCoeff} at $T=4$ with polynomial order $N=256$ on a single spectral element.}
\label{fig:varCoeff}
\end{center}
\end{figure}

\subsection{Stability demonstration for Burgers'}

This example is designed to illustrate the importance of the stability of the underlying spatial discretization and how the filtering can influence the behavior of the solution in time. For this we consider Burgers' equation and two forms of the nodal DG spatial discretization. One discretizes the conservative form of the PDE where the nonlinear Burgers' flux is simply $f(u) = u^2/2$ while the other skew-symmetric discretization writes the spatial derivative of the flux in a split formulation
\begin{equation}
f^{\text{skew}}_x(u) = \frac{2}{3}\left(\frac{u^2}{2}\right)_{\!x} + \frac{1}{3}\left(uu_x\right).
\end{equation}
On the continuous level these two forms of Burgers' equation are equivalent; however, on the discrete level they exhibit different behavior. Most notably, the solution energy $u^2/2$ is bounded for the nodal DG discretization constructed from the skew-symmetric formulation whereas no such bound exists for the discretization of the conservative form, see \cite{gassner_skew_burgers} for complete details. 

Therefore, the discretization of the skew-symmetric form is provably stable and the conservative form \textit{is not}. As discussed in the previous Sections, the DG filtering is a procedure divorced from the spatial discretization. If the underlying numerical scheme is provably stable, and the filtering is contractive, it will remove energy from the solution in a stable way. However, if the underlying scheme is unstable the filtering still removes energy and the approximation \textit{might} be stable, but no further conclusions regarding the solution energy can be drawn.

To illustrate this we consider the domain $\Omega=[0,2]$ with periodic boundary conditions and the initial condition
\begin{equation}\label{eq:burgersinitial}
u_{\text{ini}}(x) = \frac{1}{5}[1 + \cos(\pi x)].
\end{equation}
We run the simulation with polynomial order $N=128$ up to $T=2.25$ as a final time. Further, the solution is filtered at 16 equally spaced times during the simulation. Due to the nonlinear nature of Burgers' equation the initial conditions will steepen and eventually a shock will form.

We run four variants of the nodal DG scheme relevant to the present discussion:
\begin{itemize}[label=$\bullet$]
\item Conservative formulation; Unfiltered.
\item Conservative formulation; Filtered.
\item Skew-symmetric formulation; Unfiltered.
\item Skew-symmetric formulation; Filtered.
\end{itemize}
On the left in Figure~\ref{fig:burgers} we present the evolution of the solution energy, normalized with its initial value, over time. On the right in Figure~\ref{fig:burgers} we give the approximate solution at the final time produced by the filtered skew-symmetric DG formulation as well as a reference solution created with a standard finite volume scheme on 10000 grid cells.
\begin{figure}[htbp]
\begin{center}
{
\includegraphics[scale=0.49, trim=5 12 10 12, clip]{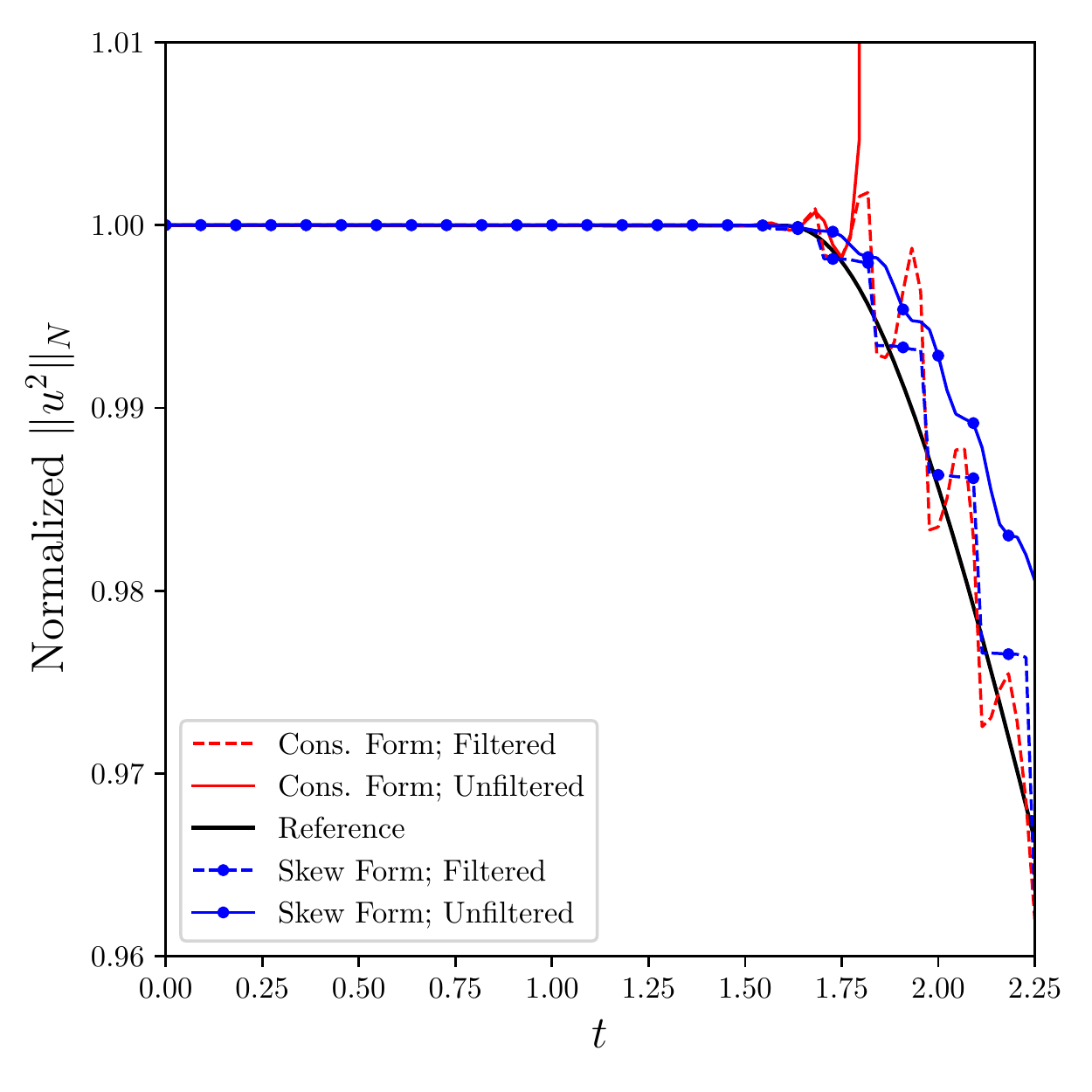}
}
{
\includegraphics[scale=0.49, trim=10 12 10 12, clip]{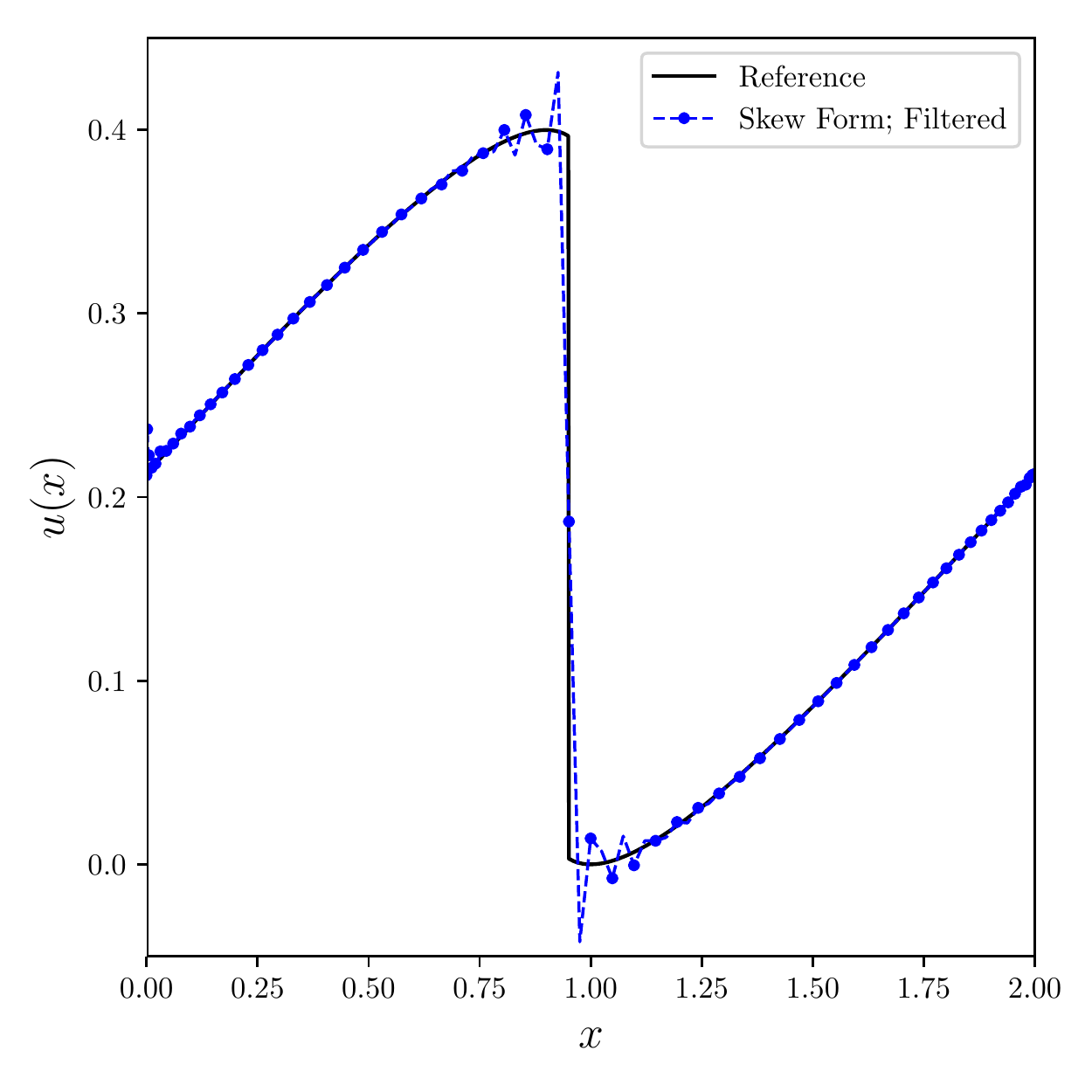}
}
\caption{(\textit{left}) Solution energy evolution of four nodal DG variants for Burgers' equation with the initial condition \eqref{eq:burgersinitial}. (\textit{right}) Plot of the filtered skew-symmetric solution ($N=128$) and a reference finite volume solution at $T=2.25$.}
\label{fig:burgers}
\end{center}
\end{figure}

Due to the high polynomial order, the simulation is well resolved and the four variants are nearly indistinguishable for most of the simulation time. However, as the gradients steepen we see that the conservative formulation, for which no energy stability statement exists, behave erratically. The unfiltered conservative form simulation crashes at $T\approx 1.8$. The filtered conservative form simulation successfully runs as the filtering keeps the solution energy ``under control.'' This is illustrated in Figure~\ref{fig:burgers} where we observe growth in the solution energy between the filter applications because the underlying spatial discretization is unstable. The solution energy of unfiltered and filtered skew-symmetric simulations both remain bounded because the underlying spatial discretization possesses an energy bound. Note, there is a small amount of dissipation in the solution energy for the unfiltered skew-symmetric scheme due to the formation of the shock \cite{jameson2008_energy}. Further, the filtered skew-symmetric simulation is less energetic, as expected, because the act of filtering removes some solution energy.
\vspace{-0.0cm}
\section{Closing remarks}

We proved that the commonly used nodal DG filter matrix $\Filtermat$ satisfied a contractivity condition. Further, we proved that a high-order auxiliary filter matrix $\explicitFiltermat$ exists for the nodal DG approximation which is necessary for the contractivity condition to be satisfied. Together, these results implied that the explicit filtering procedure in the context of nodal DG methods ``removes'' information in a stable way when measured in the norm induced by the Legendre-Gauss-Lobatto quadrature. 

Numerical results were provided to demonstrate and verify that the filtering retained the high-order accuracy of the nodal DG approximation, that it suppressed spurious oscillations near steep gradients and was stable provided the underlying spatial discretization of the method had a semi-discrete bound.

The generalization of the results described in this work to multiple spatial dimensions is straightforward due to the tensor product nature of the nodal DG method. The same is true for problems involving curved physical boundaries, similar to those discussed in \cite{lundquist2020stable}. An interesting open question for future work is the extension of provably stable filtering to problems involving multiple DG elements where the interface coupling will play a key role.

\section{Declarations}

\subsection{Funding}
This work was supported by Vetenskapsr{\aa}det, Sweden (award number 2018-05084 VR)

\subsection{Conflicts of interest}
The authors declare that they have no conflicts of interest in the present work.

\subsection{Availability of data and material}
Not applicable.

\subsection{Code availability}
The code used to generate the results in this work is available upon request with Andrew Winters (andrew.ross.winters@liu.se).

\bibliographystyle{spmpsci}
\bibliography{final_build}

\end{document}